\theoremstyle{plain}
\newtheorem{theorem}{Theorem}[section]
\newtheorem{proposition}[theorem]{Proposition}
\newtheorem{lemma}[theorem]{Lemma}
\theoremstyle{remark}
\newtheorem{remark}[theorem]{Remark}
\theoremstyle{definition}
\newtheorem{definition}[theorem]{Definition}
\begin{document}
\title[Optimal flat ribbons]{Existence of optimal flat ribbons}

\author{Simon Blatt}
\address{Departement of Mathematics\\
University of Salzburg\\
Hellbrunnerstra{\ss}e 34\\
5020 Salzburg\\
Austria}
\email{simon.blatt@sbg.ac.at}

\author{Matteo Raffaelli}
\address{Institute of Discrete Mathematics and Geometry\\
TU Wien\\
Wiedner Hauptstra{\ss}e 8-10/104\\
1040 Vienna\\
Austria}
\email{matteo.raffaelli@tuwien.ac.at}
\thanks{The second-named author was supported by Austrian Science Fund (FWF) project F~77.}
\date{May 2, 2024}
\subjclass[2020]{Primary 49Q10, 53A05; Secondary 49J45, 74B20, 74K20}
\keywords{Bending energy, developable ribbon, direct method in the calculus of variation, Gamma-convergence}

\begin{abstract}
We apply the direct method of the calculus of variations to show that any nonplanar Frenet curve in $\mathbb{R}^{3}$ can be extended to an infinitely narrow flat ribbon having \emph{minimal} bending energy. We also show that, in general, minimizers are not free of planar points, yet such points must be isolated under the mild condition that the torsion does not vanish.
\end{abstract}
\maketitle
%\tableofcontents

\section{Introduction and main result}%\label{sec:Intro}

In 1930, motivated by the problem of finding the equilibrium shape of a free-standing M\"{o}bius band, Sadowsky~\cite{sadowsky1930, hinz2015} announced without proof that the bending energy $\int_{S} H^{2}\, dA$ of the envelope of the rectifying planes of a $C^{3}$ unit-speed curve $\gamma \colon [0, l] \to \mathbb{R}^{3}$ is given by
\begin{equation}\label{eq:Sadowsky}
\frac{w}{2} \int_{0}^{l} \frac{\mleft(\kappa^{2} + \tau^{2} \mright)^{2}}{\kappa^{2}} \, dt;
\end{equation}
here $w$ is the width of the ribbon, which is measured in the normal plane of $\gamma$ and assumed to be infinitely small, while $\kappa > 0$ and $\tau$ are the curvature and torsion of $\gamma$, respectively. A proof of Sadowsky’s claim was given by Wunderlich in 1962 \cite{wunderlich1962, todres2015}.
%Although in \cite{sadowsky1930} Sadowsky does not explain how he obtained \eqref{eq:Sadowsky}, his claim was formally justified by Wunderlich in 1962 \cite{wunderlich1962, todres2015}.

Energy~\eqref{eq:Sadowsky} defines a functional on the space of $C^{3}$ curves $\gamma$ in $\mathbb{R}^{3}$, a functional that has attracted a renewed wave of interest in the last twenty years; see, e.g., \cite{hangan2005, chubelaschwili2010, giomi2010, freddi2016, starostin2018, paroni2019, bartels2020, audoly2021, bevilacqua2022, freddi2022}. On the other hand, given a \emph{fixed} curve $\gamma$, it is well known that if the curvature is nowhere zero, then there exist plenty of (infinitely narrow) flat ribbons along $\gamma$. It is therefore natural to interpret Sadowsky's energy formula, or rather a suitable generalization thereof, as a functional on the set of all such ribbons.

An important first step in this direction was taken in \cite{raffaelli2023}, where the author extended Sadowsky's formula to \emph{any} flat ribbon along $\gamma$. Indeed, he showed that the bending energy, in the limit of infinitely small width, is given by 
\begin{equation}\label{eq:Energy}
\frac{w}{2} \int_{J} \frac{\mleft(\kappa_{n}^{2} + \tau_{g}^{2} \mright)^{2}}{\kappa_{n}^{2}} \, dt,
\end{equation}
where $\kappa_{n}$ and $\tau_{g}$ are the normal curvature and the geodesic torsion of $\gamma$, respectively, and where $J = \{ t \in [0, l] \mid \kappa_{n}(t) \neq 0 \}$; see also \cite{dias2014, efrati2015, freddi2016B}.

In this context, a natural question arises: if $\gamma$ is nonplanar, i.e., when the energy is bounded away from zero, does there exist an \emph{optimal} flat ribbon along $\gamma$, that is, one having minimal bending energy? The purpose of this short note is to answer such question affirmatively when $\gamma$ is a Frenet curve. 

A preliminary step in our analysis consists in transforming \eqref{eq:Energy} into a proper functional. To do so, it is enough to observe that (the normals of) any two flat ribbons along the same curve are related by a rotation $\theta \colon [0, l] \to \mathbb{R}$ about the common tangent. 
%In other words, if $N$ and $N(\theta)$ are two unit normal vector fields along $\gamma$, and if $H = N \times T$ and $H(\theta) = N(\theta) \times T$, then 
%\begin{align*}
%H(\theta) &= \cos(\theta) H + \sin(\theta) N,\\
%N(\theta) &= -\sin(\theta) H + \cos(\theta) N.
%\end{align*}
In particular, when the principal normal $P = \gamma'' /\kappa$ is well-defined, the normal curvature and the geodesic torsion of $\gamma$ with respect to the rotated normal $P(\theta)$ can be expressed by
\begin{align*}
\kappa_{n} &= \kappa \cos(\theta),\\
\tau_{g} &= \theta' + \tau.
\end{align*}
Substituting these relations into \eqref{eq:Energy}, we thus obtain the functional
\begin{equation}\label{eq:EnergyFunctional}
E(\theta)= \int_{0}^{l} \frac{\mleft(\kappa^{2} \cos^{2}\theta + ( \theta' + \tau )^{2} \mright)^{2}}{\kappa^{2}\cos^{2}\theta} \, dt,
\end{equation}
where the integrand is understood to be $0$ (resp, $+\infty$) at any point where both the numerator and denominator vanish (resp., the denominator vanishes but the numerator does not).

Our main result pertains the functional $E$ and is contained in the following theorem.

\begin{theorem}\label{thm:ExistenceOfMinima}
\leavevmode
\begin{enumerate}[font=\upshape]
\item There is a minimizer $\theta_{\min}$ of $E$ on $W^{1,4}([0, l])$, i.e., we have
\begin{equation*}
E(\theta_{\min}) \leq E(\theta) \quad \text{for all $\theta \in W^{1,4}([0, l])$}.
\end{equation*}

\item For any $a, b \in \mathbb{R}$, there is a minimizer of $E$ on the subset
\begin{equation*}
\{ \theta \in W^{1,4}([0, l]) \mid \theta(0) = a \text{ and } \theta(l) = b \}
\end{equation*}
of $W^{1,4}([0, l])$.
\end{enumerate}
\end{theorem}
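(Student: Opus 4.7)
The plan is to apply the direct method of the calculus of variations. Let $\{\theta_n\}$ be a minimizing sequence in $W^{1,4}([0,l])$, or in the prescribed constraint set for part~(2). For part~(1), the invariance $E(\theta + 2\pi) = E(\theta)$ allows us to normalize so that $\theta_n(0) \in [0, 2\pi]$; for part~(2), the endpoint values are already fixed. Two things remain: coercivity, to extract a weakly convergent subsequence, and weak lower semicontinuity of $E$, to identify the limit as a minimizer.

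For coercivity, expand
\begin{equation*}
E(\theta) = \int_{0}^{l}\biggl(\kappa^{2}\cos^{2}\theta + 2(\theta'+\tau)^{2} + \frac{(\theta'+\tau)^{4}}{\kappa^{2}\cos^{2}\theta}\biggr)\, dt.
\end{equation*}
Bounding $\cos^{2}\theta \leq 1$ in the last term gives $\|\theta_n' + \tau\|_{L^{4}}^{4} \leq \|\kappa\|_{\infty}^{2}\, E(\theta_n)$, so $\{\theta_n'\}$ is bounded in $L^{4}$. Since $|\theta_n(0)|$ is also bounded, the estimate $|\theta_n(t)| \leq |\theta_n(0)| + l^{3/4}\|\theta_n'\|_{L^{4}}$ yields a uniform $W^{1,4}$ bound. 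Extracting a subsequence, $\theta_n \rightharpoonup \theta_{\min}$ weakly in $W^{1,4}$ and, by the compact embedding $W^{1,4}([0,l]) \hookrightarrow C^{0}([0,l])$, also uniformly on $[0,l]$; in particular the boundary values are preserved for part~(2).

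The main obstacle is weak lower semicontinuity. The integrand
\[
f(t,u,p) = \frac{(\kappa^{2}(t)\cos^{2} u + (p + \tau(t))^{2})^{2}}{\kappa^{2}(t)\cos^{2} u}
\]
is convex in $p$ wherever $\cos u \neq 0$ (by the same expansion, as a sum of $(p+\tau)^{2}$ and $(p+\tau)^{4}/(\kappa^{2}\cos^{2}u)$, both convex in $p$), but it is singular on $\{\cos u = 0\}$, so the classical Tonelli theorem does not apply directly. I would resolve this via monotone truncation: for each $\epsilon > 0$, set
\begin{equation*}
f_{\epsilon}(t,u,p) = \frac{(\kappa^{2}(t)\cos^{2} u + (p + \tau(t))^{2})^{2}}{\max\{\epsilon,\, \kappa^{2}(t)\cos^{2} u\}}.
\end{equation*}
Then $f_{\epsilon}$ is jointly continuous in $(u,p)$, convex in $p$, and $f_{\epsilon} \uparrow f$ pointwise as $\epsilon \downarrow 0$. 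The standard lower semicontinuity theorem for Carath\'eodory integrands convex in the gradient gives that each $E_{\epsilon}$ is weakly lower semicontinuous on $W^{1,4}$; by monotone convergence, $E = \sup_{\epsilon>0} E_{\epsilon}$, and a supremum of weakly lower semicontinuous functionals is weakly lower semicontinuous. Therefore $E(\theta_{\min}) \leq \liminf_n E(\theta_n) = \inf E$, yielding the desired minimizer in both cases.
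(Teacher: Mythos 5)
Your proposal is correct and follows essentially the same route as the paper: normalize by $2\pi$-periodicity so that $\theta_n(0)$ stays bounded, derive coercivity from the bound $\lVert\theta'+\tau\rVert_{L^4}^4\leq\lVert\kappa\rVert_\infty^2\,E(\theta)$ plus a Morrey-type estimate, and obtain weak lower semicontinuity by writing $E$ as the monotone supremum of regularized functionals with integrands convex in $\theta'$ (the paper uses the denominator $\kappa^2\cos^2\theta+\varepsilon^2$ rather than $\max\{\epsilon,\kappa^2\cos^2\theta\}$, an immaterial difference). The only divergence is that you invoke the standard Tonelli-type theorem for Carath\'eodory integrands where the paper gives a self-contained argument via Mazur's lemma, a shortcut the paper itself notes is available.
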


\begin{remark}
$W^{1,4}((0, l)) \subset C^{0}((0,l))$ by Sobolev embedding theorem, and we interpret $\theta \in W^{1,4}([0,l])$ as the continuous extension of a function in $W^{1,4}((0,l))$.
\end{remark}

\begin{remark}
The theorem remains valid if one replaces $\kappa \in C^{1}([0,l], \mathbb{R}_{>0})$ in \eqref{eq:EnergyFunctional} with any $f \in C^{0}([0,l])$.
\end{remark}
  
The proof of Theorem~\ref{thm:ExistenceOfMinima}, which will be finalized in section~\ref{sec:ExistenceOfMinimizers}, is based on the direct method in the calculus of variation; see section~\ref{sec:GammaConvergence} for an alternative proof relying on $\Gamma$-convergence. It involves showing the coercivity and the weak sequential lower semicontinuity of $E$ on $W^{1,4}([0, l])$ or a suitable closed subset therein. As we explain below, each of these tasks presents some challenge.

First of all, our functional is not coercive on $W^{1,4}([0, l])$, as it is $2\pi$-periodic in $\theta$. Consider, for example, the constant function $\theta_{n} = 2\pi n$: it defines an unbounded sequence in $W^{1,4}([0, l])$, and yet $E(\theta_{n}) = \int_{0}^{l}\frac{(\kappa^{2} + \tau^{2})^{2}}{\kappa^{2}} \, dt$ is (constant and) finite. On the other hand, we can use this periodicity to our advantage: since $W^{1,4}([0,l])$ embeds into the H\"older space $C^{0, \frac 34}([0,l])$---and hence also in $C^0 ([0,l])$---the fact that $E$ is $2\pi$-periodic allows us to only consider functions satisfying $\theta(0) \in [0, 2\pi]$. In the next section we show that $E$ is indeed coercive on the closed subset
\begin{equation*}
V \coloneqq \{\theta \in W^{1,4}([0,l]) \mid \theta(0) \in [0, 2\pi]\}
\end{equation*}
of $W^{1,4}([0,l])$.

As for the sequential lower semicontinuity, the main issue is that our integrand is not continuous. To deal with this problem we use an approximation argument. It turns out, as shown in section~\ref{sec:Semicontinuity}, that the sequential lower semicontinuity of $E$ follows straightforwardly from that of the regular functional 
\begin{equation*}
E_{\varepsilon}(\theta)= \int_{0}^{l} \frac{\mleft(\kappa^{2} \cos^{2}\theta + ( \theta' + \tau )^{2} \mright)^{2}}{\kappa^{2}\cos^{2}\theta + \varepsilon^{2}} \, dt,
\end{equation*}
which for $\varepsilon \to 0$ approximates $E$ monotonically from below.

%We emphasize that the singularity of the integrand makes the classical, \emph{indirect} method of the calculus of variations seemingly inaccessible in our case.
We emphasize that, precisely because of this discontinuity, the classical indirect method of the calculus of variations does not seem readily applicable in our case. Indeed, to use the Euler--Lagrange equation (in the standard way) one would need to assume that $\theta_{\min}$ is free of \emph{singular} points, i.e., that $\theta_{\min}(t) \notin \pi/2+\pi\mathbb{Z}$ for all $t \in [0,l]$. Our next result confirms that such assumption is, in general, invalid.

\begin{theorem}\label{thm:NumberOfSingularPoints}
Suppose that the torsion $\tau$ is a constant function satisfying 
\begin{equation*}
 \vert \tau \rvert > n \frac \pi l + \max \lvert \kappa \rvert \quad \text{for some $n \in \mathbb{N}_{0}$}.
\end{equation*}
Then the minimizer $\theta_{\min}$ of $E$ in $W^{1,4}([0,l])$ has at least $n$ singular points.
\end{theorem}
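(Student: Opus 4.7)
The plan is to force $|\theta_{\min}(l) - \theta_{\min}(0)| > n\pi$; once this is established, the continuity of $\theta_{\min}$ (guaranteed by the Sobolev embedding $W^{1,4}([0,l]) \hookrightarrow C^0([0,l])$) together with the intermediate value theorem immediately produces at least $n$ preimages in $\pi/2 + \pi\mathbb{Z}$, i.e., at least $n$ singular points. Both halves of the required endpoint estimate will be driven by a single quantity, $E(\theta_{\min})$, bounded above by a carefully chosen competitor and below by a sharp pointwise AM--GM.

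For the lower bound, the elementary inequality $(a+b)^2 \geq 4ab$ applied with $a = \kappa^2\cos^2\theta$ and $b = (\theta' + \tau)^2$ yields
\begin{equation*}
\frac{\bigl(\kappa^2\cos^2\theta + (\theta' + \tau)^2\bigr)^2}{\kappa^2\cos^2\theta} \geq 4(\theta' + \tau)^2
\end{equation*}
on $\{\kappa\cos\theta \neq 0\}$, hence $E(\theta) \geq 4\int_0^l (\theta' + \tau)^2\, dt$ for every admissible $\theta$. Cauchy--Schwarz combined with the fundamental theorem of calculus then gives
\begin{equation*}
\bigl|\theta_{\min}(l) - \theta_{\min}(0) + \tau l\bigr|^2 \leq l \int_0^l (\theta_{\min}' + \tau)^2\, dt \leq \tfrac{l}{4}\, E(\theta_{\min}).
\end{equation*}

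For the upper bound, I would test $E$ against the smooth admissible competitor $\theta_0(t) = -\tau t$: since $\tau$ is constant, $\theta_0' + \tau \equiv 0$, so $E(\theta_{\min}) \leq E(\theta_0) = \int_0^l \kappa^2 \cos^2(\tau t)\, dt \leq l\,(\max|\kappa|)^2$. Plugging this into the previous display and using the reverse triangle inequality,
\begin{equation*}
|\theta_{\min}(l) - \theta_{\min}(0)| \geq |\tau|\, l - \tfrac{l}{2}\max|\kappa| > n\pi + \tfrac{l}{2}\max|\kappa| > n\pi,
\end{equation*}
where the strict inequalities invoke the hypothesis $|\tau| > n\pi/l + \max|\kappa|$. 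Since an interval of length strictly greater than $n\pi$ meets the arithmetic progression $\pi/2 + \pi\mathbb{Z}$ in at least $n$ points, the conclusion follows.

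The main (and essentially only) content is the recognition of the sharp pointwise AM--GM bound built into the integrand; once that is seen, the rest is a one-line Cauchy--Schwarz plus a one-line competitor, with no delicate regularity or convexity issue intervening. I do not expect any genuine obstacle, other than verifying that the integrand of $E(\theta_0)$ remains well-defined (indeed, continuously equal to $\kappa^2\cos^2(\tau t)$) at the isolated zeros of $\cos(\tau t)$, which is routine.
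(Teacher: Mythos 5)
Your proof is correct, and the overall skeleton matches the paper's: the same competitor $\theta_0(t)=-\tau t$ gives $E(\theta_{\min})\leq l\,(\max\lvert\kappa\rvert)^2$, an energy-based lower bound on $\lvert\theta_{\min}(l)-\theta_{\min}(0)\rvert$ forces this quantity above $n\pi$, and the intermediate value theorem finishes. Where you genuinely diverge is in the mechanism of the lower bound. The paper proves a separate lemma (Lemma~\ref{lem:Singularities}) based on the crude pointwise bound $\lvert\theta'+\tau\rvert^4/(\kappa^2\cos^2\theta)\leq$ integrand together with H\"older in $L^4$, which produces the extra factor $B^{1/2}=\max\lvert\kappa\cos\theta\rvert^{1/2}$; that factor is deliberately retained because the same lemma is reused to prove that singular points are isolated (Theorem~\ref{thm:SingularPointsAreIsolated2}), where the smallness of $B$ near a singular point is the whole point. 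Your AM--GM observation $(a+b)^2\geq 4ab$, giving the integrand $\geq 4(\theta'+\tau)^2$ followed by Cauchy--Schwarz in $L^2$, is self-contained, avoids the auxiliary lemma entirely, and even yields the slightly sharper constant $l\lvert\tau\rvert-\tfrac{l}{2}\max\lvert\kappa\rvert$ versus the paper's $l(\lvert\tau\rvert-\max\lvert\kappa\rvert)$ --- a cleaner route for this particular theorem, at the cost of not generalizing to the isolatedness result. One small point you should make explicit: the step from ``the pointwise inequality holds on $\{\kappa\cos\theta\neq 0\}$'' to $E(\theta)\geq 4\int_0^l(\theta'+\tau)^2\,dt$ needs the observation that any $\theta$ with $E(\theta)<\infty$ (in particular $\theta_{\min}$, by comparison with the competitor) must satisfy $\theta'+\tau=0$ almost everywhere on $\{\kappa\cos\theta=0\}$, since $E=\sup_{\varepsilon>0}E_\varepsilon$ would otherwise be infinite; hence the omitted set contributes nothing to the right-hand side and the integral inequality over all of $[0,l]$ is legitimate.
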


Thus, according to Theorem~\ref{thm:NumberOfSingularPoints}, one can enforce the presence of singular points. On the other hand, it turns out that the set of singular points is necessarily discrete when the torsion does not vanish.

\begin{theorem}\label{thm:SingularPointsAreIsolated}
Let $t_0$ be a singular point of $\theta_{\min}$ with $\tau(t_0) \not = 0$. Then $t_0$ is an isolated singular point, i.e., there is an $\varepsilon > 0$ such that the $\varepsilon$-neighborhood $I_{\varepsilon} \coloneqq (t_0 - \varepsilon, t_0 + \varepsilon) \cap [0,l]$ around $t_0$ does not contain any other singular point.
\end{theorem}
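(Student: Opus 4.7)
I would argue by contradiction: suppose $t_0$ is a non-isolated singular point of $\theta_{\min}$ with $\tau(t_0) \neq 0$, and show that this forces $E(\theta_{\min}) = +\infty$. By continuity of $\tau$, I can choose $\delta_0 > 0$ such that $\tau \geq c > 0$ (taking $\tau(t_0) > 0$ without loss of generality) on $[t_0 - \delta_0, t_0 + \delta_0] \cap [0, l]$. Because the set of singular values $\pi/2 + \pi\mathbb{Z}$ is discrete and $\theta_{\min}$ is continuous, at every singular point sufficiently close to $t_0$ the function $\theta_{\min}$ takes the same value $\theta_{\min}(t_0)$. Setting $u \coloneqq \theta_{\min} - \theta_{\min}(t_0)$, so that $\cos^2\theta_{\min} = \sin^2 u \leq u^2$, and using $(A+B)^2/A \geq B^2/A$ for $A, B \geq 0$, I reduce the contradiction to showing $\int (u'+\tau)^4/u^2\, dt = +\infty$ on a suitable subset of $(t_0, t_0+\delta_0)$, where by assumption singular points accumulate (the one-sided case suffices by pigeonhole).

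Next, I would analyze the zero set $Z \coloneqq \{t \in (t_0, t_0 + \delta_0) : u(t) = 0\}$, which is relatively closed with $t_0$ in its closure. A crucial observation is that $Z$ has empty interior: if $u \equiv 0$ on some open interval $I$, then $u' = 0$ a.e.\ on $I$ and the integrand equals $\tau^4/0 = +\infty$ there, already contradicting $E(\theta_{\min}) < +\infty$. Consequently the complement of $Z$ in $(t_0, t_0 + \delta_0)$ is a disjoint union of countably many open intervals $(a_i, b_i)$, on each of which $u$ has constant sign. Since $Z$ cannot contain a right neighborhood of $t_0$ (same reason), there must be infinitely many such intervals, with widths $\omega_i \coloneqq b_i - a_i$ necessarily tending to zero and endpoints $a_i, b_i \in Z$ (hence zeros of $u$) for $i$ large.

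The heart of the argument is the uniform lower bound
\begin{equation*}
\int_{a_i}^{b_i} \frac{(u'+\tau)^4}{u^2}\, dt \geq C\, \omega_i^{-1/2},
\end{equation*}
obtained by chaining two Cauchy--Schwarz inequalities with the Morrey embedding. Since $u(a_i) = u(b_i) = 0$, the fundamental theorem of calculus gives $\int_{a_i}^{b_i}(u'+\tau)\, dt = \int_{a_i}^{b_i}\tau\, dt \geq c\omega_i$, and Cauchy--Schwarz applied to the pair $\bigl((u'+\tau), 1\bigr)$ yields $\int_{a_i}^{b_i}(u'+\tau)^2\, dt \geq c^2\omega_i$. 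A second Cauchy--Schwarz on $\bigl((u'+\tau)^2/|u|, |u|\bigr)$ gives $\int (u'+\tau)^4/u^2 \geq c^4\omega_i^2 / \int u^2$. Finally, the Morrey embedding $W^{1,4}([0,l]) \hookrightarrow C^{0,3/4}([0,l])$ together with $u(a_i)=0$ yields $|u(t)| \leq \|u'\|_{L^4([0,l])}\, \omega_i^{3/4}$ on $[a_i, b_i]$, whence $\int_{a_i}^{b_i} u^2 \leq \|u'\|_{L^4}^2\, \omega_i^{5/2}$; combining everything produces the displayed bound with $C = c^4/\|u'\|_{L^4}^2$.

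Summing over $i$ and using $\omega_i \to 0$, one obtains $\sum_i \omega_i^{-1/2} = +\infty$, which forces the reduced integral to be infinite and delivers the desired contradiction. I expect the main obstacle to be the third step, namely assembling the Cauchy--Schwarz chain so that the resulting lower bound is \emph{uniform} in $i$: the decisive point is that the Morrey control of $u$ on the tiny interval $[a_i, b_i]$ is powered by the \emph{globally} finite $L^4$-norm of $u'$, which is available because $\theta_{\min} \in W^{1,4}([0,l])$. The structural analysis of $Z$ in the second step is also delicate, but it ultimately rests only on the continuity of $\theta_{\min}$ and the discreteness of the singular values.
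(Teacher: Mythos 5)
Your proof is correct, but it takes a genuinely different route from the paper's. The paper argues directly rather than by contradiction: Lemma~\ref{lem:Singularities}, proved with exactly the ingredients you use (fundamental theorem of calculus, H\"older/Cauchy--Schwarz, the Morrey estimate), yields $\lvert\theta(t)-\theta(t_0)\rvert \ge \lvert t-t_0\rvert\bigl(\min\lvert\tau\rvert-\Lambda^{1/4}\lvert t-t_0\rvert^{1/8}E^{3/8}(\theta)\bigr)$ on $[t_0,t]$, the point being that the Morrey bound makes $B=\max\lvert\kappa\cos\theta\rvert$ of order $\lvert t-t_0\rvert^{3/4}$ near the singular point; thus $\theta$ leaves the singular value at a definite linear rate and cannot return within an explicit radius. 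You instead suppose singular points accumulate at $t_0$, pass to $u=\theta_{\min}-\theta_{\min}(t_0)$ with $\cos^2\theta_{\min}=\sin^2 u\le u^2$, decompose the complement of the zero set of $u$ into components $(a_i,b_i)$, and prove the per-component bound $\int_{a_i}^{b_i}(u'+\tau)^4/u^2\ge C\,\omega_i^{-1/2}$ with $C$ uniform in $i$ thanks to the globally finite $\lVert u'\rVert_{L^4}$; the divergent sum over infinitely many components gives the contradiction. All your steps check out, including the topological analysis of $Z$ (empty interior, hence infinitely many components with both endpoints in $Z$) and the identification of zeros of $u$ near $t_0$ with singular points. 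The paper's version buys brevity and a quantitative isolation radius; your per-gap estimate is a nice reusable statement in its own right, and would in fact let you dispense with the accumulation scaffolding altogether, since a single gap of width $\omega$ between consecutive zeros of $u$ already forces $\omega\ge(C/E(\theta_{\min}))^{2}$.
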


It is somewhat surprising that both Theorems \ref{thm:NumberOfSingularPoints} and \ref{thm:SingularPointsAreIsolated} can be obtained, as we do in section~\ref{sec:SingularPoints}, on the basis of such elementary results as the fundamental theorem of calculus, the reverse triangle inequality, and H\"older's inequality.

\section{Coercivity}
Once and for all, let
\begin{equation*}
\Lambda =  \max \{ \lVert \tau \rVert_{L^\infty}, \lVert \kappa \rVert_{L^\infty} \}.
\end{equation*}

The purpose of this section is to prove the following lemma.

\begin{lemma}\label{lem:CoercivityE}
The functional $E$ is coercive on the closed subset
\begin{equation*}
V \coloneqq \{\theta \in W^{1,4}([0,l]) \mid \theta(0) \in [0, 2\pi]\}
\end{equation*}
of $W^{1,4}([0,l])$, i.e., we have for $\theta \in V$ that $E(\theta) \to \infty$ if $\lVert \theta \rVert_{W^{1,4}} \to \infty$. 
\end{lemma}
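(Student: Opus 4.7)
The strategy is to first derive a pointwise lower bound on the integrand that keeps control of $\theta'$ in $L^{4}$, and then upgrade this to $W^{1,4}$-coercivity by exploiting the trace constraint $\theta(0) \in [0, 2\pi]$.

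First, I would observe that for nonnegative $a, b$ one has $(a + b)^{2} \geq b^{2}$, hence $(a+b)^{2}/a \geq b^{2}/a$. Setting $a = \kappa^{2}\cos^{2}\theta$ and $b = (\theta' + \tau)^{2}$, and using the trivial \emph{upper} bound $a \leq \lVert \kappa \rVert_{L^{\infty}}^{2} \leq \Lambda^{2}$, I obtain the pointwise estimate
\begin{equation*}
\frac{\bigl(\kappa^{2}\cos^{2}\theta + (\theta' + \tau)^{2}\bigr)^{2}}{\kappa^{2}\cos^{2}\theta} \;\geq\; \frac{(\theta' + \tau)^{4}}{\Lambda^{2}}.
\end{equation*}
The crucial point here is that we use only an upper bound on the denominator, thereby sidestepping the singularities at $\cos\theta = 0$ or $\kappa = 0$ that obstruct more refined estimates. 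Integrating over $[0, l]$ gives $E(\theta) \geq \Lambda^{-2}\lVert \theta' + \tau \rVert_{L^{4}}^{4}$.

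Next, by the reverse triangle inequality in $L^{4}$ and the bound $\lVert \tau \rVert_{L^{4}} \leq \Lambda\, l^{1/4}$, we have $\lVert \theta' + \tau \rVert_{L^{4}} \geq \lVert \theta' \rVert_{L^{4}} - \Lambda\, l^{1/4}$. Hence there exists a constant $C_{1} = C_{1}(\Lambda, l)$ such that
\begin{equation*}
E(\theta) \;\geq\; \frac{1}{\Lambda^{2}}\bigl( \lVert \theta' \rVert_{L^{4}} - \Lambda\, l^{1/4} \bigr)_{+}^{4},
\end{equation*}
so $E(\theta) \to \infty$ as soon as $\lVert \theta' \rVert_{L^{4}} \to \infty$.

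It remains to show that divergence of $\lVert \theta \rVert_{W^{1,4}}$ on $V$ forces $\lVert \theta' \rVert_{L^{4}}$ to diverge. This is where the constraint $\theta(0) \in [0, 2\pi]$ enters. By the fundamental theorem of calculus and H\"older's inequality, for every $t \in [0, l]$,
\begin{equation*}
\lvert \theta(t) \rvert \;\leq\; \lvert \theta(0) \rvert + \int_{0}^{l} \lvert \theta'(s) \rvert\, ds \;\leq\; 2\pi + l^{3/4}\, \lVert \theta' \rVert_{L^{4}},
\end{equation*}
so that $\lVert \theta \rVert_{L^{4}} \leq l^{1/4}\bigl(2\pi + l^{3/4}\lVert \theta' \rVert_{L^{4}}\bigr)$. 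Consequently, boundedness of $\lVert \theta' \rVert_{L^{4}}$ on $V$ implies boundedness of $\lVert \theta \rVert_{W^{1,4}}$; contrapositively, $\lVert \theta \rVert_{W^{1,4}} \to \infty$ on $V$ forces $\lVert \theta' \rVert_{L^{4}} \to \infty$, and combined with the previous estimate this yields $E(\theta) \to \infty$.

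The potential obstacle one might worry about is that the integrand is unbounded in the denominator, which typically obstructs clean coercivity arguments. The key observation that makes everything work is that we only need the denominator bounded \emph{above} (and the numerator contains the full expression $(\theta' + \tau)^{4}$ as a summand), so the singular behavior of the integrand is harmless for this particular purpose.
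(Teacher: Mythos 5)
Your proof is correct and follows essentially the same strategy as the paper: drop the numerator to $(\theta'+\tau)^4$, bound the denominator above by $\Lambda^2$ to get $L^4$-control of $\theta'+\tau$, and then use the fundamental theorem of calculus with the constraint $\theta(0)\in[0,2\pi]$ to show that $\lVert\theta\rVert_{W^{1,4}}\to\infty$ forces $\lVert\theta'\rVert_{L^4}\to\infty$. The only (harmless) variation is that you pass from $\lVert\theta'+\tau\rVert_{L^4}$ to $\lVert\theta'\rVert_{L^4}$ via the reverse triangle inequality in $L^4$, where the paper instead uses the pointwise bound $\lvert\theta'+\tau\rvert\geq\lvert\theta'\rvert/2$ on the set where $\lvert\theta'\rvert\geq 2\Lambda$; your version is, if anything, slightly cleaner.
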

\begin{proof}
The strategy is to show that $E \to \infty$ if $\lVert \theta' \rVert_{L^{4}} \to \infty$, and that $\lVert \theta' \rVert_{L^{4}} \to \infty$ when $\lVert \theta \rVert_{W^{1,4}} \to \infty$.

First, note that
\begin{equation}\label{eq:Coercivity0}
\Lambda^2 E(\theta) \geq \int_0^l \frac {(\theta'+\tau)^4 } {\cos^{2} \theta} \,dt.
\end{equation}
Since $\lvert \theta' + \tau \rvert \geq \frac{\lvert \theta' \rvert}{2}$ if $\lvert \theta' \rvert \geq 2 \lVert \tau \rVert_{L^{\infty}} \leq 2\Lambda$, equation~\eqref{eq:Coercivity0} implies
\begin{equation*} %\label{eq:Coercivity1}
   \Lambda^2 E(\theta) \geq \frac{1}{16} \int_{\lvert \theta' \rvert \geq 2 \Lambda} (\theta')^4 \,dt
    \geq \frac{1}{16} \int_0^l (\theta')^4 \, dt - \Lambda^{4} l = \frac{1}{16} \lVert \theta' \rVert^{4}_{L^{4}} - \Lambda^{4} l,
\end{equation*}
and so if the homogeneous Sobolev norm $\lVert \theta' \rVert_{L^{4}}$ goes to infinity, then so does $E$. 

Next, let $x \neq y \in [0,l]$. Applying H\"older's inequality, we obtain the following Morrey estimate:
\begin{equation*}
\lvert \theta(x) - \theta(y) \rvert =  \lvert \int_{[x,y]} \theta' \, dt \rvert  \leq \lvert x-y \rvert^{\frac 3 4} \lVert \theta' \rVert_{L^4([0,l])}.
\end{equation*}
Together with $\theta(0) \in [0, l]$, this implies
\begin{equation*}
 \lVert \theta\rVert_{L^4} \leq \lVert \theta(\cdot) - \theta (0)\rVert_{L^4} + \lVert \theta(0)\rVert_{L^4} \leq l \lVert \theta'\rVert_{L^4} + 2 \pi l^{\frac 1 4},
\end{equation*}
from which we conclude that 
\begin{equation*} %\label{eq:Coercivity2}
\lVert \theta \rVert_{W^{1,4}} \leq (1+l) \lVert \theta' \rVert_{L^4} + 2 \pi l^{\frac 14},
\end{equation*}
as desired.
\end{proof}

\section{Weak sequential lower semicontinuity}\label{sec:Semicontinuity}

In this section we prove the sequential lower semicontinuity of $E$ on $W^{1,4}([0,l])$.

\begin{lemma} \label{lem:SeminContinuity}
The functional $E$ is weakly sequentially lower semicontinuous, i.e., for any sequence of functions $\theta_n \in W^{1,4}([0,l])$ with weak limit $\theta \in W^{1,4}([0,l])$ we have 
\begin{equation*}
E(\theta) \leq \liminf_{n \to \infty} E(\theta_n).
\end{equation*}
\end{lemma}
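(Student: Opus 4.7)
The plan is to reduce weak lower semicontinuity of $E$ to that of the regularized functional $E_\varepsilon$ and then pass to the limit $\varepsilon \searrow 0$. Since the integrand of $E_\varepsilon$ increases pointwise to that of $E$ as $\varepsilon \searrow 0$ (allowing the value $+\infty$ on $\{\cos\theta = 0\}$), the monotone convergence theorem gives $E_\varepsilon(\theta) \nearrow E(\theta)$ for each $\theta \in W^{1,4}([0,l])$. Since a pointwise supremum of weakly sequentially lsc functionals is again weakly sequentially lsc, it suffices to show that $E_\varepsilon$ is weakly sequentially lsc on $W^{1,4}([0,l])$ for every $\varepsilon > 0$.

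For this, observe that the integrand
\begin{equation*}
F_\varepsilon(t, u, p) = \frac{\mleft(\kappa(t)^2 \cos^2 u + (p + \tau(t))^2\mright)^2}{\kappa(t)^2 \cos^2 u + \varepsilon^2}
\end{equation*}
is continuous in $(u, p)$ (the denominator is bounded below by $\varepsilon^2$) and, for fixed $(t, u)$, convex in $p$: expanding the numerator exhibits it as a nonnegative combination of $1$, $(p + \tau)^2$, and $(p + \tau)^4$, each convex in $p$, divided by a positive constant. This is exactly the setting of the classical Tonelli--Serrin--Ioffe lower semicontinuity theorem. In one dimension the argument can also be run by hand: if $\theta_n \rightharpoonup \theta$ in $W^{1,4}([0,l])$, the compact embedding $W^{1,4}([0,l]) \hookrightarrow C^0([0,l])$ yields $\cos^2 \theta_n \to \cos^2 \theta$ uniformly, while $\theta_n' \rightharpoonup \theta'$ in $L^4$; convexity in $p$ then delivers $E_\varepsilon(\theta) \leq \liminf_n E_\varepsilon(\theta_n)$. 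Combined with the trivial bound $E_\varepsilon \leq E$, this gives
\begin{equation*}
E_\varepsilon(\theta) \leq \liminf_n E_\varepsilon(\theta_n) \leq \liminf_n E(\theta_n),
\end{equation*}
and taking the supremum over $\varepsilon$ on the left, using the monotone convergence step above, yields $E(\theta) \leq \liminf_n E(\theta_n)$.

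The main obstacle is precisely the discontinuity (indeed, blow-up to $+\infty$) of the integrand of $E$ along $\{\cos\theta = 0\}$, which rules out a direct application of standard Carath\'eodory-type lsc theorems. The $\varepsilon$-regularization sidesteps this by providing an everywhere-continuous approximation from below that retains convexity in $\theta'$; the price paid is the need to coordinate two successive limits ($n \to \infty$ followed by $\varepsilon \to 0$), which is harmless here because the $\varepsilon$-limit is monotone.
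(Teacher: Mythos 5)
Your proposal is correct and follows essentially the same route as the paper: regularize to $E_\varepsilon$, use monotone convergence to write $E = \sup_{\varepsilon>0} E_\varepsilon$, invoke the fact that a supremum of weakly sequentially lower semicontinuous functionals is again such, and establish lower semicontinuity of $E_\varepsilon$ from convexity of the integrand in $\theta'$. The only difference is that you appeal to the classical Tonelli--Serrin--Ioffe theorem for the last step, whereas the paper--after explicitly noting that citing such a theorem would suffice--spells out a self-contained argument via the compact embedding into $C^0$ and Mazur's lemma.
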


As already mentioned in the introduction, the plan is to consider for $\varepsilon >0$ the regular functional
\begin{equation*}
E_{\varepsilon}(\theta)= \int_{0}^{l} \frac{\mleft(\kappa^{2} \cos^{2}\theta + ( \theta' + \tau )^{2} \mright)^{2}}{\kappa^{2}\cos^{2}\theta + \varepsilon^{2}} \, dt.
\end{equation*}
Note that, for fixed $\theta$, the integrand is monotonically decreasing in $\varepsilon$. Using Beppo Levi's monotone convergence theorem, we thus get that
\begin{equation*}
E(\theta) = \lim_{\varepsilon \downarrow 0} E_{\varepsilon} (\theta) = \sup_{\varepsilon >0} E_{\varepsilon} (\theta).
\end{equation*}

Now, if we knew that $E_{\varepsilon}$ was weakly sequentially lower semicontinuous, then the following well-known lemma would imply the same for $E$. It simply states the the supremum of any collection of lower semicontinuous functions is again lower semicontinuous.

\begin{lemma}
Let $X$ be a topological space, and let $A_i \colon X \to [0, \infty]$, $i \in I$ be a family of lower semicontinuous functions. Then $A \colon X \rightarrow [0, \infty]$ defined by $A(x) = \sup_{i \in I} A_i (x)$ is lower semicontinuous.
\end{lemma}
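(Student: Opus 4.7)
The plan is to use the characterization of lower semicontinuity through superlevel sets: a function $f \colon X \to [0, \infty]$ is lower semicontinuous if and only if $\{x \in X \mid f(x) > \alpha\}$ is open for every $\alpha \in \mathbb{R}$. This reformulation is convenient because taking suprema interacts well with strict inequalities.

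With this characterization in hand, I would fix $\alpha \in \mathbb{R}$ and verify the set identity
\begin{equation*}
\{x \in X \mid A(x) > \alpha\} = \bigcup_{i \in I} \{x \in X \mid A_i(x) > \alpha\}.
\end{equation*}
The inclusion from right to left follows immediately from $A_i \leq A$, while the inclusion from left to right is the defining property of a supremum: if $\sup_{i \in I} A_i(x) > \alpha$, then $A_j(x) > \alpha$ for at least one $j \in I$.

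Once this identity is established, the conclusion is immediate. Each set $\{A_i > \alpha\}$ is open by the assumed lower semicontinuity of $A_i$, and a union of open sets is open; hence $\{A > \alpha\}$ is open for every $\alpha$, which proves that $A$ is lower semicontinuous.

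There is no real obstacle here, since the statement is purely topological and the argument reduces to an elementary set-theoretic identity. The only subtlety worth mentioning is that the same argument would fail for the infimum of a family of lower semicontinuous functions (the dual identity would involve an intersection, not a union), which is precisely why the monotone approximation $E = \sup_{\varepsilon > 0} E_\varepsilon$ set up before the lemma is the right one for our purposes.
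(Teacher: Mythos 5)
Your proof is correct and follows essentially the same route as the paper's: both verify the set identity $\{A > \alpha\} = \bigcup_{i \in I}\{A_i > \alpha\}$ and conclude that the superlevel set of $A$ is open as a union of open sets. Your additional justification of both inclusions in the identity is a slight elaboration of what the paper leaves implicit, but the argument is identical in substance.
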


%For the sake of completeness we include a proof.

\begin{proof}
We need to show that the set $A^{-1} ((a,\infty])$ is open for all $a \in [0, \infty)$. First, using the fact that $A= \sup_{i \in I} A_i$, we get
\begin{equation*}
A^{-1} ((a,\infty]) = \bigcup_{i \in I} A^{-1}_i ((a, \infty]).
\end{equation*}
Since $A_i$ is lower semicontinuous for all $i \in I$, we observe  that $A^{-1} ((a,\infty])$ is the union of open sets, and so open itself.
\end{proof}

It remains to show the lower semicontinuity of $E_\varepsilon$.

\begin{lemma}\label{lem:LowerSemicontinuity}
For all $\varepsilon >0$, the functional $E_\varepsilon$ is weakly sequentially lower semicontinuous on $W^{1,4}([0,l])$.
\end{lemma}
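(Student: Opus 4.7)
The plan is to apply the standard weak lower semicontinuity criterion for integral functionals with a convex-in-gradient integrand, using the compact embedding $W^{1,4}([0,l]) \hookrightarrow C^{0}([0,l])$ to handle the dependence on $\theta$ itself. Writing $E_{\varepsilon}(\theta) = \int_{0}^{l} f(t, \theta(t), \theta'(t))\, dt$ with
\begin{equation*}
f(t, u, p) = \frac{(\kappa(t)^{2} \cos^{2} u + (p + \tau(t))^{2})^{2}}{\kappa(t)^{2} \cos^{2} u + \varepsilon^{2}},
\end{equation*}
a direct computation gives $\partial_{p}^{2} f = 4\bigl(\kappa^{2}\cos^{2} u + 3(p+\tau)^{2}\bigr) / (\kappa^{2} \cos^{2} u + \varepsilon^{2}) \geq 0$, so $f$ is convex in $p$ and jointly continuous in $(t, u, p)$. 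Moreover, since $\kappa, \tau \in L^{\infty}$, both $f$ and $\partial_{p} f$ satisfy the growth bounds $0 \leq f(t, u, p) \leq C_{\varepsilon}(1 + p^{4})$ and $\lvert \partial_{p} f(t, u, p) \rvert \leq C_{\varepsilon}(1 + \lvert p \rvert^{3})$, with $C_{\varepsilon}$ depending only on $\Lambda$ and $\varepsilon$.

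Now let $\theta_{n} \rightharpoonup \theta$ in $W^{1,4}([0,l])$. Rellich--Kondrachov yields uniform convergence $\theta_{n} \to \theta$ in $C^{0}([0,l])$, while $\theta_{n}' \rightharpoonup \theta'$ in $L^{4}$. Convexity in $p$ gives the pointwise tangent-plane inequality
\begin{equation*}
f(t, \theta_{n}, \theta_{n}') \geq f(t, \theta_{n}, \theta') + \partial_{p} f(t, \theta_{n}, \theta') \, (\theta_{n}' - \theta'),
\end{equation*}
which, integrated over $[0,l]$ and passed to $\liminf_{n \to \infty}$, is the heart of the argument. For the first term on the right, dominated convergence with majorant $C_{\varepsilon}(1 + \lvert \theta' \rvert^{4}) \in L^{1}$ gives $\int_{0}^{l} f(t, \theta_{n}, \theta') \, dt \to E_{\varepsilon}(\theta)$. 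For the correction term, $\partial_{p} f(t, \theta_{n}, \theta') \to \partial_{p} f(t, \theta, \theta')$ pointwise and is dominated by $C_{\varepsilon}(1 + \lvert \theta' \rvert^{3}) \in L^{4/3}$, hence the convergence is strong in $L^{4/3}$; paired with the weakly null sequence $\theta_{n}' - \theta'$ in $L^{4}$, the corresponding integral tends to $0$. Combining these three observations gives $E_{\varepsilon}(\theta) \leq \liminf_{n \to \infty} E_{\varepsilon}(\theta_{n})$.

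The main technical point is the mixed dependence on both $\theta_{n}$ and $\theta_{n}'$: one cannot pass to the limit pointwise inside the integral because $\theta_{n}'$ converges only weakly. The tangent-plane inequality cleanly decouples the two slots, so that the uniform convergence $\theta_{n} \to \theta$ controls the $u$-variable while convexity of $f$ in $p$ converts the weak convergence in the $p$-variable into a one-sided bound. An alternative, shorter route would be to invoke an abstract result such as Ioffe's lower semicontinuity theorem for integral functionals with nonnegative, Carath\'eodory, convex-in-gradient integrand, which applies verbatim to $E_{\varepsilon}$; the argument sketched above has the advantage of being elementary and self-contained.
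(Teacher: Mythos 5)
Your proof is correct, and it takes a genuinely different route from the paper's. Both arguments rest on the same two structural facts---the compact embedding $W^{1,4}([0,l]) \hookrightarrow C^{0}([0,l])$ to upgrade the convergence in the $\theta$-slot to uniform convergence, and convexity of the integrand in $\theta'$ to handle the merely weak convergence of the derivatives---but they exploit the convexity differently. The paper splits $E_{\varepsilon}(\theta_{n})$ into four terms $I_{1},\dotsc,I_{4}$, kills $I_{1},I_{2},I_{3}$ by uniform convergence and H\"older, and then applies Mazur's lemma to the remaining term $I_{4}$ (in which only $\theta_{n}'$ still varies), using convexity of $p \mapsto (\kappa^{2}\cos^{2}\theta + (p+\tau)^{2})^{2}$ together with the strong $W^{1,4}$-convergence of the convex combinations. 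You instead run the classical Tonelli-type argument: the supporting-line inequality $f(t,\theta_{n},\theta_{n}') \geq f(t,\theta_{n},\theta') + \partial_{p}f(t,\theta_{n},\theta')(\theta_{n}'-\theta')$ decouples the two slots directly, dominated convergence (with the correctly verified majorants $C_{\varepsilon}(1+|p|^{4})$ and $C_{\varepsilon}(1+|p|^{3})$, which exist only because $\varepsilon>0$ keeps the denominator away from zero) handles the first term, and the strong-$L^{4/3}$-times-weak-$L^{4}$ pairing kills the correction term. Your computation $\partial_{p}^{2}f = 4(\kappa^{2}\cos^{2}u + 3(p+\tau)^{2})/(\kappa^{2}\cos^{2}u+\varepsilon^{2}) \geq 0$ is right. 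What each approach buys: yours is the self-contained textbook argument and makes the role of convexity completely explicit, at the price of computing $\partial_{p}f$ and its growth; the paper's Mazur-based route avoids any differentiation of the integrand and any growth estimate on $\partial_{p}f$, needing only continuity of $I_{4}$ under strong convergence, which makes it slightly more robust but requires the preliminary four-term decomposition. Both are complete proofs of the lemma.
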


As the integrand is convex in $\theta'$, to prove Lemma~\ref{lem:LowerSemicontinuity} it would be enough to invoke \cite[Theorem~1.6]{struwe2008}. We nevertheless include an independent proof---which combines Sobolev embeddings with Mazur's lemma---for the benefit of the reader.

\begin{proof}[Proof of Lemma~\textup{\ref{lem:LowerSemicontinuity}}]
Let $\theta_{n}$ converge weakly to $\theta$ in $W^{1,4}([0,l])$. Since $W^{1,4}([0,l])$ embeds compactly into $C^{0}([0,l])$ and the image of any weakly convergent sequence under a compact embedding converges strongly, we deduce that $\theta_{n}$ and $(\kappa^{2} \cos^{2}\theta_{n}+\varepsilon^{2})^{-1}$ converge uniformly to $\theta$ and $(\kappa^{2} \cos^{2}\theta+\varepsilon^{2})^{-1}$, respectively. Moreover, exchanging $\theta_{n}$ by a suitable subsequence, we may assume that $E_{\varepsilon}(\theta_{n})$ converges to the limit inferior of the original sequence.

We now aim to get rid of $\theta_{n}$ in the expression of $E_{\varepsilon}(\theta_{n})$ and only keep $\theta'_{n}$. To this end, we first rewrite $E_{\varepsilon}(\theta_{n})$
as
\begin{equation*}
E_{\varepsilon}(\theta_{n}) = I_{1}(\theta_{n}) + I_{2}(\theta_{n}) + I_{3}(\theta_{n}) + I_{4}(\theta_{n}),
\end{equation*} 
where
%\begin{spreadlines}{1ex}
\begin{align*}
I_{1}(\theta_{n}) &= \int_0^l \mleft(\kappa^2 \cos^{2}\theta_n + (\theta'_n + \tau)^2 \mright)^2  \mleft(\frac{1}{ \kappa^2 \cos^{2}\theta_n + \varepsilon^2 } - \frac {1}{ \kappa^2 \cos^{2}\theta + \varepsilon^{2} } \mright) \,dt,\\
I_{2}(\theta_{n}) &=  \int_0^l \frac{\kappa^4 \cos^{4}\theta_n - \kappa^{4} \cos^{4}\theta }{ \kappa^2 \cos^{2}\theta + \varepsilon^2 } \,dt,\\
I_{3}(\theta_{n}) &= \int_0^l \frac {2 \mleft(\kappa^2\cos^{2}\theta_n  - \kappa^2 \cos^{2}\theta  \mright) (\theta_n'+\tau)^2 }{ \kappa^2 \cos^{2}\theta + \varepsilon^2 } \, dt,\\
I_{4}(\theta_{n}) &= \int_0^l \frac {\mleft(\kappa^2\cos^{2}\theta + (\theta_n'+\tau)^2\mright)^2 }{ \kappa^2\cos^{2}\theta + \varepsilon^2 } \,dt.   
\end{align*}
%\end{spreadlines}
Since $\cos \theta_n$ and $(\kappa^2 \cos^{2}\theta_n + \varepsilon^2)^{-1}$ converge uniformly to  $\cos \theta$ and $(\kappa^2\cos^{2}\theta + \varepsilon^2)^{-1}$, respectively, and
\begin{equation*}
 \int_0^l \mleft(\kappa^2\cos^{2}\theta_n + (\theta_n'+\tau)^2\mright)^2 \,dt \leq (\Lambda^{-2} + \varepsilon^2)^{-1} \sup_n E_\varepsilon( \theta_n) < \infty,
\end{equation*} 
H\"older's inequality implies that $I_1(\theta_{n})$, $I_2(\theta_{n})$, and $I_3(\theta_{n})$ converge to $0$ as $n$ tends to $\infty$; hence that $I_4(\theta_{n})$ converges to $\lim_{n \to \infty} E_{\varepsilon}(\theta_n)$.

To deal with the term $I_4$ we use that the integrand is convex in $\theta'_n$. Let $m \in \mathbb{N}$. Mazur's lemma~\cite[Theorem~3.13]{rudin1991} tells us that there are convex combinations $\phi_n = \sum_{i=m}^{m+n} \lambda_{n,i} \theta_i$ of $\theta_m, \dotsc, \theta_{m+n}$ that converge strongly to $\theta$ in $W^{1,4}([0,l])$. Using the convexity of the integrand, we obtain
\begin{equation*}
\sup_{i \geq m } I_4(\theta_i) \geq \sum_{i=m}^{m+n} \lambda_{n,i} I_4(\theta_i) \geq I_4 (\sum_{i=m}^{m+n} \lambda_{n,i} \theta_i ) = I_4(\phi_n).
\end{equation*} 
As $I_{4}$ is continuous on $W^{1,4}([0,l])$ and the convex combinations $\phi_{n}$ converge to $\theta$ strongly, this yields
\begin{equation*}
\sup_{i \geq m} I_4(\theta_i) \geq I_4 (\theta).
\end{equation*} 
Finally, taking the limit as $m \to \infty$, we deduce that 
\begin{equation*}
\lim_{m \to\infty} E_{\varepsilon} (\theta_m) = \lim_{m \to \infty} I_4 (\theta_m) \geq I_4 (\theta).
\end{equation*}
Hence $E_4$ is weakly sequentially lower semicontinuous on $W^{1,4}([0,l])$.
\end{proof}

\section{Existence of minimizers}\label{sec:ExistenceOfMinimizers}

We are now ready to prove our main result, Theorem~\ref{thm:ExistenceOfMinima} in the introduction.

\begin{proof}[Proof of Theorem~\textup{\ref{thm:ExistenceOfMinima}}]
To begin with, let 
\begin{equation*}
V' = \{ \theta \in W^{1,4}([0, l]) \mid \theta(0) = a - 2\pi \lfloor a \rfloor \text{ and } \theta(l) = b -2\pi \lfloor a \rfloor\},
\end{equation*}
where $\lfloor \cdot \rfloor$ denotes the floor function. We are going to show that $E$ has a minimizer on both $V$ and its subset $V'$, which is also closed and convex in $W^{1,4}([0,l])$. This way, the statement will follow directly from the periodicity of $E$.

Let thus $\theta_{n}$ be a minimizing sequence for $E$ on either $V$ or $V'$. By Lemma~\ref{lem:CoercivityE}, the sequence $\theta_{n}$ is bounded in $W^{1,4}([0,l])$, and so, according to \cite[Lemma~1.13.3]{megginson1998}, we can assume after passing to a subsequence that it converges weakly to a function $\theta_{0} \in W^{1,4}([0,l])$; in particular, since any closed and convex subset of a Banach space is weakly closed, we deduce that the limit $\theta_{0}$ is contained in either $V$ or $V'$.

Now, as $E$ is weakly sequentially lower semicontinuos by Lemma~\ref{lem:LowerSemicontinuity}, we have 
\begin{equation*}
\inf_{\theta} E(\theta) = \lim_{n \to \infty} E(\theta_n) \geq E(\theta_{0}) \geq \inf_{\theta} E(\theta),
\end{equation*}
where the infimum is taken over $V$ or $V'$. Hence these inequalities must be equalities, and so $\theta_{0}$ is a minimizer of $E$ on $V$ or $V'$. 
\end{proof}

\section{\texorpdfstring{$\Gamma$}{Gamma}-convergence}\label{sec:GammaConvergence}

Here we give an alternative proof of Theorem~\ref{thm:ExistenceOfMinima} based on the fundamental theorem of $\Gamma$-convergence~\cite[Theorem~7.8]{dalmaso1993}. 

To begin with, note that $E_{\varepsilon}$ is coercive on $V$, as $E_{\varepsilon} > E$; having already shown that it is weakly sequentially lower semicontinuous, we have the existence of minimizers for any $\varepsilon > 0$.

\begin{proposition}
\leavevmode
\begin{enumerate}[font=\upshape]
\item There is a minimizer of $E_{\varepsilon}$ on $W^{1,4}([0, l])$.

\item For any $a, b \in \mathbb{R}$, there is a minimizer of $E_{\varepsilon}$ on the subset
\begin{equation*}
W^{1,4}_{ab}([0,l]) = \{ \theta \in W^{1,4}([0, l]) \mid \theta(0) = a \text{ and } \theta(l) = b \}
\end{equation*}
of $W^{1,4}([0, l])$.
\end{enumerate}
\end{proposition}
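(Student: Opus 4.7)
The plan is to carry out, almost verbatim, the direct method argument from the proof of Theorem~\ref{thm:ExistenceOfMinima}, only this time applied to $E_\varepsilon$. The two required ingredients are coercivity and weak sequential lower semicontinuity; the latter has already been settled in Lemma~\ref{lem:LowerSemicontinuity}, so only the coercivity statement needs (minor) attention.

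For coercivity, I would mimic the proof of Lemma~\ref{lem:CoercivityE}. Since $\kappa^{2}\cos^{2}\theta + \varepsilon^{2} \leq \Lambda^{2} + \varepsilon^{2}$, bounding the denominator of $E_{\varepsilon}$ from above and discarding the non-negative term $\kappa^{2}\cos^{2}\theta$ in the numerator gives
\begin{equation*}
(\Lambda^{2} + \varepsilon^{2}) E_{\varepsilon}(\theta) \geq \int_{0}^{l} (\theta' + \tau)^{4}\, dt,
\end{equation*}
after which the argument of Lemma~\ref{lem:CoercivityE} proceeds unchanged: separating $\{|\theta'| \geq 2\Lambda\}$ from its complement yields $\|\theta'\|_{L^{4}}^{4} \leq C_{\varepsilon}\, E_{\varepsilon}(\theta) + C_{\varepsilon}'$, and the Morrey estimate then controls $\|\theta\|_{L^{4}}$ by $\|\theta'\|_{L^{4}}$ as soon as $\theta(0)$ is bounded. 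This gives coercivity on $V$; for the set $W^{1,4}_{ab}([0,l])$ the same chain of inequalities works, with the fixed value $\theta(0) = a$ playing the role of the bounded boundary value.

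For part~(1), I would then exploit the $2\pi$-periodicity of $E_{\varepsilon}$ in $\theta$ to reduce the problem to minimizing on $V$. A minimizing sequence $\theta_{n} \in V$ is bounded in $W^{1,4}([0,l])$ by coercivity, hence admits (via reflexivity, \cite[Lemma~1.13.3]{megginson1998}) a subsequence converging weakly to some $\theta_{0} \in W^{1,4}([0,l])$; since $V$ is closed and convex, it is weakly closed, and so $\theta_{0} \in V$. Lemma~\ref{lem:LowerSemicontinuity} then yields $E_{\varepsilon}(\theta_{0}) \leq \liminf_{n} E_{\varepsilon}(\theta_{n}) = \inf_{V} E_{\varepsilon}$, and periodicity transfers the minimizer to all of $W^{1,4}([0,l])$. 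For part~(2), one works directly on $W^{1,4}_{ab}([0,l])$, which is again closed and convex (hence weakly closed), and applies the same extraction/lower-semicontinuity argument without the periodicity detour.

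I do not anticipate any genuine obstacle: the delicate work—namely the approximation by $E_{\varepsilon}$ and Mazur's lemma for weak lower semicontinuity—was done in Section~\ref{sec:Semicontinuity}, and the coercivity is a cosmetic adaptation of Lemma~\ref{lem:CoercivityE}. The only point worth double-checking is that the Morrey estimate retains its force on the Dirichlet subspace, but this is automatic as soon as $\theta(0)$ is prescribed.
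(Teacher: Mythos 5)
Your proposal is correct and follows the same direct-method route the paper intends: coercivity plus the weak sequential lower semicontinuity already established in Lemma~\ref{lem:LowerSemicontinuity}, then extraction of a weak limit in the weakly closed sets $V$ or $W^{1,4}_{ab}$. The one step you handle differently---and, in fact, more carefully than the paper---is coercivity. The paper disposes of it with the remark that ``$E_{\varepsilon}$ is coercive on $V$, as $E_{\varepsilon} > E$''; but the integrand of $E_{\varepsilon}$ is monotonically \emph{decreasing} in $\varepsilon$, so $E_{\varepsilon} \leq E$, and no pointwise comparison with $E$ can give a lower bound for $E_{\varepsilon}$ where $\kappa^{2}\cos^{2}\theta$ is small. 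Your direct adaptation of Lemma~\ref{lem:CoercivityE}, bounding the denominator by $\Lambda^{2}+\varepsilon^{2}$ and discarding $\kappa^{2}\cos^{2}\theta$ in the numerator to get $(\Lambda^{2}+\varepsilon^{2})E_{\varepsilon}(\theta) \geq \int_{0}^{l}(\theta'+\tau)^{4}\,dt$, is the correct way to fill this gap (and is essentially the comparison with $E_{1}$ that the paper later uses for equicoercivity). Your observation that part~(2) can be run directly on $W^{1,4}_{ab}$, since the Morrey estimate only needs $\theta(0)$ bounded, is also sound and slightly more streamlined than the paper's detour through the shifted boundary data defining $V'$.
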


%To apply the fundamental theorem, we thus have to show that $E_{\varepsilon} \xrightarrow{\Gamma} E$ weakly and is equicoercive on $V$.
To apply the fundamental theorem, we first show that $E_{\varepsilon} \xrightarrow{\Gamma} E$ weakly.

\begin{proposition}
The functionals $E_{\varepsilon}$ $\Gamma$-converge to $E$ on $W^{1,4}([0,l])$ with the weak topology as $\varepsilon$ goes to $0$.
\end{proposition}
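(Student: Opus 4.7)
My plan is to verify the two standard inequalities defining $\Gamma$-convergence on the weak topology of $W^{1,4}([0,l])$: the $\Gamma$-liminf inequality along arbitrary weakly convergent sequences, and the existence of a recovery sequence. The key ingredients are already at our disposal, namely the monotone convergence $E_\varepsilon(\theta) \nearrow E(\theta)$ as $\varepsilon \downarrow 0$ (via Beppo Levi, as already noted in the introduction) and the weak sequential lower semicontinuity of each individual $E_\varepsilon$ established in Lemma~\ref{lem:LowerSemicontinuity}. These two facts essentially do all the work.

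For the $\Gamma$-liminf I would take sequences $\varepsilon_n \downarrow 0$ and $\theta_n \rightharpoonup \theta$ in $W^{1,4}([0,l])$ and aim to bound $\liminf_n E_{\varepsilon_n}(\theta_n) \geq E(\theta)$. The idea is to freeze an auxiliary parameter $\delta > 0$ and exploit the pointwise monotonicity $E_{\varepsilon_n}(\theta_n) \geq E_\delta(\theta_n)$, valid as soon as $\varepsilon_n \leq \delta$ (the integrand is decreasing in the regularization parameter). Applying Lemma~\ref{lem:LowerSemicontinuity} to $E_\delta$ on the weakly convergent sequence $\theta_n$ then yields
\begin{equation*}
\liminf_{n \to \infty} E_{\varepsilon_n}(\theta_n) \geq \liminf_{n \to \infty} E_\delta(\theta_n) \geq E_\delta(\theta),
\end{equation*}
and sending $\delta \downarrow 0$, together with the monotone convergence $E_\delta(\theta) \nearrow E(\theta)$, closes this direction.

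For the $\Gamma$-limsup, the natural recovery sequence is simply the constant one, $\theta_\varepsilon := \theta$, which converges (strongly, hence weakly) to $\theta$ and for which Beppo Levi once more gives $\lim_{\varepsilon \downarrow 0} E_\varepsilon(\theta) = E(\theta)$. So there is no real obstacle to overcome; the single point where care is needed is the order of limits in the liminf step: one must first pass to the limit in $n$ at a fixed regular level $\delta$, so that weak lower semicontinuity applies, and only afterwards let $\delta \downarrow 0$. Everything else is bookkeeping.
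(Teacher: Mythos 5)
Your proposal is correct and follows essentially the same route as the paper: both arguments freeze the regularization at a fixed level (your $\delta$, the paper's $\varepsilon_m$), apply the weak sequential lower semicontinuity of that fixed functional along the weakly convergent sequence, and then pass to the limit in the level using the monotone convergence $E_\delta(\theta)\nearrow E(\theta)$; the recovery sequence is the constant one in both cases. Nothing further is needed.
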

\begin{proof}
To show the $\Gamma$-convergence of  $E_{\varepsilon}$, we have to prove a liminf and a limsup inequality.

For the liminf inequality, suppose that $\theta_{\varepsilon}$ converges weakly to $\theta$ in $W^{1,4}([0,l])$, and choose a null sequence $\varepsilon_{n}$ such that
\begin{equation*}
\lim_{n \rightarrow \infty} E_{\varepsilon_n} (\theta_{\varepsilon_n}) = \liminf_{\varepsilon \downarrow 0 } E_\varepsilon (\theta_{\varepsilon}).
\end{equation*}
Clearly, as the functionals $E_{\varepsilon_{n}}$ are weakly sequentially lower semicontinuous, 
\begin{equation*}
\lim_{n\rightarrow \infty} E_{\varepsilon_m} (\theta_{\varepsilon_n}) \geq E_{\varepsilon_m} (\theta) \quad \text{for all $m \in \mathbb{N}$}.
\end{equation*}
Letting $m$ go to infinity and observing that the right-hand side converges to $E(\theta)$, we thus obtain
\begin{equation*}
\lim_{n\rightarrow \infty} E_{\varepsilon_n} (\theta_{\varepsilon_n}) \geq E (\theta),
\end{equation*}
as desired.

As for the limsup inequality, for $\theta \in W^{1,4}([0,l])$ we simply take $\theta_\varepsilon = \theta$ for all $\varepsilon >0$ as recovery sequence. This we can do, because the monotonicity of the integrand implies
\begin{equation*}
\lim_{\varepsilon \downarrow 0}  E_\varepsilon (\theta) = E(\theta)
\end{equation*}
via Beppo Levi's monotone convergence theorem.
\end{proof}

Having shown that the approximating functionals $E_{\varepsilon}$ have a minimizer and  $\Gamma$-converge to $E$, the missing ingredient needed to deduce the existence of a minimizer of $E$ is equicoercivity, which we discuss below.
%Next we discuss the equicoercivity of $E_{\varepsilon}$.

\begin{definition}
A family of functionals $F_\alpha \colon X \to \mathbb R$, $\alpha \in I$ on a normed vector space $X$ is said to be \textit{equicoercive} if the set 
\begin{equation*} 
\bigcup_{\alpha \in I} \{F_\alpha \leq t\} 
\end{equation*}
is bounded for all $t \in \mathbb{R}$.
\end{definition}

\begin{lemma}\label{lem:Equicoercivity}
The family of functionals $E_\varepsilon$, $0 < \varepsilon \leq 1$ is equicoercive on $V$.
\end{lemma}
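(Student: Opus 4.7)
The plan is to repeat the argument of Lemma~\ref{lem:CoercivityE} but with a lower bound on $E_\varepsilon$ whose constants do not depend on $\varepsilon \in (0,1]$. The crucial observation is that, although the denominator in $E_\varepsilon$ now depends on $\varepsilon$, it is uniformly bounded from \emph{above} on the admissible range: for any $\theta$ and any $0 < \varepsilon \leq 1$,
\begin{equation*}
\kappa^{2}\cos^{2}\theta + \varepsilon^{2} \leq \Lambda^{2} + 1.
\end{equation*}
Combining this with the trivial bound $(\kappa^{2}\cos^{2}\theta + (\theta' + \tau)^{2})^{2} \geq (\theta' + \tau)^{4}$ yields
\begin{equation*}
(\Lambda^{2}+1)\, E_{\varepsilon}(\theta) \geq \int_{0}^{l} (\theta' + \tau)^{4}\, dt,
\end{equation*}
uniformly in $\varepsilon \in (0,1]$.

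From here I would follow the same splitting as in the proof of Lemma~\ref{lem:CoercivityE}. On the set $\{|\theta'| \geq 2\Lambda\}$ one has $|\theta' + \tau| \geq |\theta'|/2$, and on its complement $|\theta'|^{4} \leq (2\Lambda)^{4}$. Hence
\begin{equation*}
(\Lambda^{2}+1)\, E_{\varepsilon}(\theta) \geq \frac{1}{16}\int_{\{|\theta'|\geq 2\Lambda\}} (\theta')^{4}\, dt \geq \frac{1}{16}\lVert \theta'\rVert_{L^{4}}^{4} - (2\Lambda)^{4} l,
\end{equation*}
so that the condition $E_{\varepsilon}(\theta) \leq t$ forces $\lVert \theta'\rVert_{L^{4}}$ to lie in a bounded set independent of $\varepsilon$.

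Finally, the Morrey-type estimate already recorded in the proof of Lemma~\ref{lem:CoercivityE}, together with the constraint $\theta(0)\in[0,2\pi]$ defining $V$, gives
\begin{equation*}
\lVert \theta \rVert_{W^{1,4}} \leq (1+l)\lVert \theta' \rVert_{L^{4}} + 2\pi l^{1/4},
\end{equation*}
which upgrades the uniform bound on $\lVert \theta'\rVert_{L^{4}}$ to a uniform bound on $\lVert \theta\rVert_{W^{1,4}}$. This shows that $\bigcup_{0<\varepsilon\leq 1}\{\theta \in V \mid E_{\varepsilon}(\theta)\leq t\}$ is bounded in $W^{1,4}([0,l])$ for every $t$, as required. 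There is no substantial obstacle here beyond spotting the uniform upper bound on the denominator; the rest is a verbatim replay of Lemma~\ref{lem:CoercivityE}, with $\Lambda^{2}$ replaced by $\Lambda^{2}+1$.
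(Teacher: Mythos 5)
Your proof is correct, but it takes a different (and more self-contained) route than the paper. The paper's own proof is a one-liner: since the integrand is nonincreasing in $\varepsilon$, one has $E_\varepsilon \geq E_1$ for all $\varepsilon \in (0,1]$, hence $\bigcup_{\varepsilon \in (0,1]}\{E_\varepsilon \leq t\} \subset \{E_1 \leq t\}$, and the latter set is bounded because $E_1$ is coercive. You instead rerun the coercivity argument of Lemma~\ref{lem:CoercivityE} directly for $E_\varepsilon$, observing that the denominator is bounded above by $\Lambda^2+1$ uniformly in $\varepsilon \in (0,1]$, so that all the constants in the lower bound $\frac{1}{16}\lVert\theta'\rVert_{L^4}^4 - C$ are $\varepsilon$-independent; combined with the Morrey estimate and $\theta(0)\in[0,2\pi]$ this gives uniform boundedness of the sublevel sets. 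The paper's reduction is slicker, but it silently relies on the coercivity of $E_1$ on $V$, which the paper justifies earlier only by the assertion that ``$E_\varepsilon$ is coercive on $V$, as $E_\varepsilon > E$''---an inequality that is in fact reversed ($E_\varepsilon \leq E$, since the functionals approximate $E$ from below), so that assertion does not prove coercivity of $E_\varepsilon$. Your argument supplies exactly the estimate needed to close that gap, at the cost of a few extra lines; your constants (e.g.\ $-(2\Lambda)^4 l$ in place of the sharper $-\Lambda^4 l$) are not optimal but are perfectly adequate.
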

\begin{proof}
As $E_\varepsilon$ is pointwise nonincreasing in $\varepsilon >0$, we have 
\begin{equation*} 
E_\varepsilon \geq E_1,
\end{equation*}
and so
\begin{equation*} 
\bigcup_{\varepsilon \in (0,1]} \{E_\varepsilon \leq t\} \subset \{E_1 \leq t\}.
\end{equation*}
But the set on the right-hand side is bounded, as we know that $E_1$ is coercive on $W^{1,4}([0,l])$.
\end{proof}

Applying \cite[Theorem~7.8]{dalmaso1993}, we finally get the existence of a minimizer of $E$.% as the limit of a sequence of minimizers of $E_{\varepsilon}$ as $\varepsilon$ goes to zero.

\begin{theorem}\label{thm:GammaConvergence}
The infimum of $E$ is attained and we have
\begin{equation*} 
\min_{\theta} E(\theta) = \lim_{\varepsilon \downarrow 0} \min_{\theta} E_{\varepsilon}(\theta),
\end{equation*}
where the minima are taken over $W^{1,4}([0,l])$ or $W^{1,4}_{ab}([0,l])$.
\end{theorem}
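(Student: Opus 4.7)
The plan is to invoke the fundamental theorem of $\Gamma$-convergence \cite[Theorem~7.8]{dalmaso1993}, whose three hypotheses have essentially been verified in the preceding two propositions and in Lemma~\ref{lem:Equicoercivity}: (i) every $E_\varepsilon$ admits a minimizer, (ii) $E_\varepsilon \xrightarrow{\Gamma} E$ in the weak topology of $W^{1,4}([0,l])$, and (iii) the family $\{E_\varepsilon\}_{\varepsilon \in (0,1]}$ is equicoercive on $V$. The only genuine work left is to bridge the gap between the coercive subset $V$ (resp.\ its boundary-value analogue $V'$) and the full space $W^{1,4}([0,l])$ (resp.\ $W^{1,4}_{ab}([0,l])$) by exploiting the $2\pi$-periodicity of $E$ in $\theta$.

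First I would apply \cite[Theorem~7.8]{dalmaso1993} on $V$ endowed with the weak topology inherited from $W^{1,4}([0,l])$. Since $V$ is weakly closed and convex, the restrictions $E_\varepsilon|_V$ remain weakly sequentially lower semicontinuous and equicoercive, and the $\Gamma$-convergence $E_\varepsilon \to E$ persists on $V$. The fundamental theorem then yields both the existence of a minimizer $\theta_{\min} \in V$ of $E$ and the identity
\begin{equation*}
\min_{\theta \in V} E(\theta) = \lim_{\varepsilon \downarrow 0} \min_{\theta \in V} E_\varepsilon(\theta).
\end{equation*}

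Next I would transfer the conclusion from $V$ to $W^{1,4}([0,l])$ via periodicity. For any $\theta \in W^{1,4}([0,l])$, the shifted function $\theta - 2\pi\lfloor \theta(0)/(2\pi) \rfloor$ lies in $V$ and has the same energy under $E$ and under each $E_\varepsilon$; therefore the infima over $V$ and over the full space coincide, and the minimizer found in $V$ is also a minimizer on $W^{1,4}([0,l])$. For the boundary-value part, I would replace $V$ by
\begin{equation*}
V' = \bigl\{ \theta \in W^{1,4}([0,l]) \bigm| \theta(0) = a - 2\pi\lfloor a/(2\pi) \rfloor,\ \theta(l) = b - 2\pi\lfloor a/(2\pi) \rfloor \bigr\},
\end{equation*}
which is a weakly closed affine subspace of $V$. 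The proof of Lemma~\ref{lem:CoercivityE} goes through verbatim on $V'$ (only $\theta(0)$ being bounded was used), so equicoercivity of $\{E_\varepsilon\}$ on $V'$ holds by the same argument as in Lemma~\ref{lem:Equicoercivity}, and the fundamental theorem again yields a minimizer with the limit identity. The $2\pi$-translation recovers a minimizer on $W^{1,4}_{ab}([0,l])$.

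The main obstacle has already been neutralised upstream: without the restriction to $V$ (or $V'$) the functional is not coercive, so the equicoercivity hypothesis of \cite[Theorem~7.8]{dalmaso1993} would fail. Once this is circumvented by periodicity, the remainder of the argument is a routine packaging of previous lemmas.
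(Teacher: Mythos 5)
Your proposal is correct, and it follows the route the paper itself announces immediately before stating the theorem: a direct application of the fundamental theorem of $\Gamma$-convergence \cite[Theorem~7.8]{dalmaso1993} on $V$ (resp.\ $V'$), combined with the existence of minimizers of $E_\varepsilon$, the $\Gamma$-convergence, and the equicoercivity of Lemma~\ref{lem:Equicoercivity}, followed by a transfer to $W^{1,4}([0,l])$ (resp.\ $W^{1,4}_{ab}([0,l])$) via $2\pi$-periodicity. The paper's displayed proof, by contrast, deliberately avoids the black box and unpacks the same argument into elementary steps: the pointwise inequality $E_\varepsilon \leq E$ gives $\inf E \geq \lim_{\varepsilon \downarrow 0}\min E_\varepsilon$; the competitor $\theta \equiv 0$ gives the uniform bound $\min E_\varepsilon \leq \Lambda^2 l + \lVert \tau \rVert_{L^2}^2$, which together with equicoercivity produces a weakly convergent sequence of minimizers $\theta_{\varepsilon_n} \rightharpoonup \theta_0 \in V$ (resp.\ $V'$); and the liminf inequality then closes the chain $\inf E \leq E(\theta_0) \leq \liminf_n E_{\varepsilon_n}(\theta_{\varepsilon_n}) = \lim_{\varepsilon \downarrow 0}\min E_\varepsilon$, forcing equality throughout. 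Your version buys brevity at the price of relying on the cited theorem and of checking (as you do) that the $\Gamma$-convergence and equicoercivity restrict correctly to the weakly closed convex sets $V$ and $V'$; the paper's version is self-contained. A small bonus of your write-up: the normalization $\theta(0) = a - 2\pi\lfloor a/(2\pi)\rfloor$ is the correct one for landing in $[0,2\pi]$, whereas the paper's definition of $V'$ writes $a - 2\pi\lfloor a \rfloor$, which appears to be a typo.
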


To keep the exposition as self-contained as possible, we close this section by giving an independent proof of Theorem~\ref{thm:GammaConvergence}.

\begin{proof}[Proof of Theorem~\textup{\ref{thm:GammaConvergence}}]
From $E_\varepsilon \leq E$ we immediately get the inequality
\begin{equation}\label{eq:LimMinInequality}
\inf_{\theta} E(\theta) \geq \lim_{\varepsilon \downarrow 0} \min_{\theta} E_{\varepsilon}(\theta).
\end{equation}
Moreover, as $E(0) = \int_0^l \kappa ^2 + \tau^2 \,dt \leq \Lambda ^2 l + \lVert\tau\rVert^2_{L^2}$ and $E_{\varepsilon}$ is monotonically decreasing in $\varepsilon > 0$, we have the uniform bound
\begin{equation*}
\min_{\theta} E_{\varepsilon}(\theta) \leq \Lambda^2 l + \lVert \tau\rVert^2_{L^2}.
\end{equation*}

To show the existence of a minimizer of $E$, let $\varepsilon_n > 0$ be a null sequence, and choose minimizers $\theta_{\varepsilon_{n}} \in V$ (resp., $\theta_{\varepsilon_{n}} \in V'$) of $E_{\varepsilon_n}$. As $E_{\varepsilon_n} (\theta_{\varepsilon_{n}}) \leq \Lambda ^2 l+ \lVert\tau\rVert_{L^2}^2$, Lemma~\ref{lem:Equicoercivity} ensures that the sequence $\theta_{\varepsilon_{n}}$ is bounded in $W^{1,4}([0,l])$. Hence, exactly as in the proof of Theorem~\ref{thm:ExistenceOfMinima}, we can assume that it converges weakly to some $\theta_0 \in V$ (resp., $\theta_0 \in V'$) in $W^{1,4}([0,l])$. 

Now, applying the liminf inequality, we obtain 
\begin{equation*}
\inf_{\theta}E(\theta) \leq E(\theta_0)  \leq \liminf_{n\rightarrow \infty} E_{\varepsilon_n} (\theta_{\varepsilon_{n}}) = \lim_{\varepsilon \downarrow 0} \min_{\theta} E_{\varepsilon}(\theta),
\end{equation*}
where the infimum and minimum are taken over $V$ (resp., $V'$). Together with \eqref{eq:LimMinInequality}, this implies 
\begin{align*}
	\lim_{\varepsilon \downarrow 0} \min_{\theta} E_{\varepsilon}(\theta) & \leq \inf_{\theta}E(\theta) \leq E(\theta_0)  \leq \liminf_{n\rightarrow \infty} E_{\varepsilon_n} (\theta_{\varepsilon_{n}}) \\ &= \lim_{\varepsilon \downarrow 0} \min_{\theta} E_{\varepsilon}(\theta),
\end{align*}
and so this series of inequalities must hold with equality. Especially, we have 
\begin{equation*}
E(\theta_0)  = \min_{\theta} E (\theta) = \lim_{\varepsilon \downarrow 0} \min_{\theta} E_{\varepsilon}(\theta),
\end{equation*}
and the theorem follows by periodicity.
\end{proof}

\section{Number of singular points}\label{sec:SingularPoints}

Let $t \in [0, l]$. We say that $t$ is a \textit{singular} point of $\theta \in W^{1,4}([0,l])$ if $\theta(t) \in \frac{\pi}{2} + \pi\mathbb{Z}$, i.e., if the denominator of our integrand vanishes. Clearly, when $E(\theta) <\infty$, singular points of $\theta$ correspond to planar points of the associated flat ribbon. The purpose of this section is to show that under certain assumptions, a minimizer has ``many" singular points; besides, we will see that if $\tau(t) \neq 0$, then $t$ is at most an \emph{isolated} singular point.

We begin with a lemma. To state it, let us introduce for any compact interval $I \subset [0, l]$ the energy
\begin{equation*}
E_{I}(\theta)= \int_{I} \frac{\mleft(\kappa^{2} \cos^{2}\theta + ( \theta' + \tau )^{2} \mright)^{2}}{\kappa^{2}\cos^{2} \theta} \, dt.
\end{equation*}

\begin{lemma}\label{lem:Singularities}
Let $a, b$ such that $0 \leq a <b \leq l$, and suppose that the torsion $\tau$ never vanishes in $[a,b]$. Then
\begin{equation}\label{eq:SingularitiesLemma}
\lvert\theta(a) - \theta(b)\rvert \geq A (b-a)  -  B ^{ \frac 12}  (b-a)^{\frac 34 }E_I (\theta) ^{\frac 1 4},
\end{equation}
where
\begin{equation*}
A = \min_{t\in [a,b]} \lvert \tau (t)\rvert
\end{equation*}
and
\begin{equation*}
B= \max_{t \in [a,b]}\lvert\kappa(t) \cos \theta(t) \rvert.
\end{equation*}
\end{lemma}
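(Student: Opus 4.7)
The plan is to combine the fundamental theorem of calculus, the reverse triangle inequality, and H\"older's inequality, exactly as the authors announce in the introduction.

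First, by the fundamental theorem of calculus,
\begin{equation*}
\theta(b) - \theta(a) = \int_a^b (\theta'+\tau) \, dt - \int_a^b \tau \, dt.
\end{equation*}
Since $\tau$ is continuous and never vanishes on $[a,b]$, by the intermediate value theorem it has constant sign there, so
\begin{equation*}
\Bigl| \int_a^b \tau \, dt \Bigr| = \int_a^b |\tau| \, dt \geq A(b-a);
\end{equation*}
the reverse triangle inequality then yields
\begin{equation*}
|\theta(b) - \theta(a)| \geq A(b-a) - \Bigl| \int_a^b (\theta'+\tau) \, dt \Bigr|.
\end{equation*}

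Next, I would estimate the remainder by H\"older's inequality with conjugate exponents $4$ and $4/3$:
\begin{equation*}
\Bigl| \int_a^b (\theta'+\tau) \, dt \Bigr| \leq (b-a)^{3/4} \Bigl( \int_a^b (\theta'+\tau)^4 \, dt \Bigr)^{1/4}.
\end{equation*}
The heart of the argument is then the pointwise bound
\begin{equation*}
(\theta'+\tau)^4 \leq \bigl( \kappa^2 \cos^2 \theta + (\theta'+\tau)^2 \bigr)^2 \leq B^2 \cdot \frac{ \bigl( \kappa^2 \cos^2 \theta + (\theta'+\tau)^2 \bigr)^2 }{ \kappa^2 \cos^2 \theta },
\end{equation*}
which combines $(\theta'+\tau)^2 \leq \kappa^2 \cos^2 \theta + (\theta'+\tau)^2$ with $\kappa^2 \cos^2 \theta \leq B^2$. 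Integrating over $[a,b]$ gives $\int_a^b (\theta'+\tau)^4 \, dt \leq B^2 E_I(\theta)$, whence
\begin{equation*}
\Bigl| \int_a^b (\theta'+\tau) \, dt \Bigr| \leq (b-a)^{3/4} B^{1/2} E_I(\theta)^{1/4}.
\end{equation*}

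Combining everything yields \eqref{eq:SingularitiesLemma}. There is no real obstacle here: if $E_I(\theta) = \infty$ the bound is vacuous (the right-hand side of \eqref{eq:SingularitiesLemma} is then $-\infty$), while otherwise finiteness of $E_I(\theta)$ forces $\theta' + \tau$ to vanish a.e.\ on $\{\kappa \cos \theta = 0\}$, so the pointwise bound makes sense a.e.\ and the integration step is harmless.
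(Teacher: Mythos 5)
Your proof is correct and follows essentially the same route as the paper's: fundamental theorem of calculus plus the reverse triangle inequality, the constant sign of $\tau$ to get the $A(b-a)$ term, and H\"older's inequality combined with the pointwise bound $(\theta'+\tau)^4 \leq B^2\,\bigl(\kappa^2\cos^2\theta + (\theta'+\tau)^2\bigr)^2/(\kappa^2\cos^2\theta)$ to control the remainder. Your closing remark on the degenerate cases is a welcome extra precision but does not change the argument.
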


\begin{proof}
First, an application of the fundamental theorem of calculus and the reverse triangle inequality yields 
\begin{equation*}
\lvert \theta(a) - \theta(b) \rvert = \lvert \int_a^b \theta'\,dt \rvert =  \lvert \int_a^b - \tau + (\theta' + \tau) \, dt \rvert \geq \lvert \int_a^b \tau\, dt \rvert - \int_a^b \lvert\theta' + \tau \rvert \, dt.
\end{equation*}
%We are going to examine the last two terms separately.
Note that, as $\tau$ never vanishes in $[0,l]$, it must have a sign there. Hence
\begin{equation*}
\lvert \int_a^b \tau \,dt\rvert \geq (b-a) \min_{t\in [a,b]} \lvert \tau (t) \rvert = (b-a) A .
\end{equation*}
Moreover, by H\"older's inequality,
\begin{align*}
\int_a^b \lvert \theta'+ \tau \rvert \, dt 
&\leq (b-a)^{\frac 34 }\biggl (\int_a^b \lvert \theta' + \tau \rvert^4 \, dt \biggr)^{\frac 1 4}
\\ & \leq \max_{t \in [a,b]} \lvert \kappa(t) \cos \theta(t) \rvert^{ \frac 12}  (b-a)^{\frac 34 } \biggl(\int_a^b \frac { \lvert \theta' + \tau \rvert^4}{\kappa^2 \cos^{2} \theta} \,dt \biggr)^{\frac 1 4}
\\ & \leq  \max_{t \in [a,b]}\lvert \kappa(t) \cos \theta(t) \rvert ^{\frac 12} (b-a)^{\frac 34 } E_I (\theta) ^{\frac 1 4} 
\\ &= B ^{ \frac 12} (b-a)^{\frac 34 } E_I (\theta) ^{\frac 1 4}.	
\end{align*}
Summing up, we get
\begin{equation*}
\lvert \theta(a) - \theta (b)\rvert  \geq  A (b-a)  - B ^{ \frac 12} (b-a)^{\frac 34 } E_I (\theta) ^{\frac 1 4},
\end{equation*}
which is the desired conclusion.
\end{proof}

Applying Lemma~\ref{lem:Singularities} for $a=0$ and $b=l$, we can now deduce that minimizers of $E$ are generally not free of singular points. In fact, by making sure that the right-hand side of \eqref{eq:SingularitiesLemma} is large enough, one can enforce the presence of \emph{any} given number of singular points---as explained by Theorem~\ref{thm:NumberOfSingularPoints} in the introduction (reproduced below for the reader's convenience).

\begingroup
\def\thetheorem{\ref*{thm:NumberOfSingularPoints}}
\begin{theorem}
Suppose that the torsion $\tau$ is a constant function satisfying 
\begin{equation*}
	\vert \tau \rvert > n \frac \pi l + \max \lvert \kappa \rvert \quad \text{for some $n\in \mathbb{N}_{0}$}.
\end{equation*}
Then the minimizer $\theta_{\min}$ of $E$ in $W^{1,4}([0,l])$ has at least $n$ singular points.
\end{theorem}
\addtocounter{theorem}{-1}
\endgroup

\begin{proof}
Let $K = \max_{t\in [0,l]} \lvert \kappa(t) \rvert$. Using the linear function $\theta_0(t) = - \int_0^t \tau (s) \,ds$ as competitor, we obtain 
\begin{equation*}
E(\theta_{\min}) \leq E(\theta_0) = \int_0^l \kappa ^2 \cos^{2} \theta\, dt \leq K^2 l.
\end{equation*}

Now we apply Lemma~\ref{lem:Singularities} to the complete interval, i.e., for $a=0$ and $b=l$. Since $A= \min_{t\in [0,l]} \lvert\tau (t)\rvert = \lvert\tau \rvert$ and $B= \max_{t\in [0,l]} \lvert\kappa(t) \cos (\theta(t)) \rvert \leq K$, equation~\eqref{eq:SingularitiesLemma} yields
\begin{equation*}
\lvert \theta_{\min}(0) - \theta_{\min}(l) \rvert \geq l  \lvert \tau \rvert - K^{\frac 1 2} l^{\frac 34} K^{\frac 1 2} l^{\frac 14} = l ( \lvert \tau \rvert - K),
\end{equation*}
which is larger than $n \pi$ if $\lvert \tau \rvert > n \frac \pi l + K$.
\end{proof}

Although, as we just saw, singularities abound, the following generalized version of Theorem~\ref{thm:SingularPointsAreIsolated} shows that they typically form a discrete set. The proof is again based on Lemma~\ref{lem:Singularities}.

\begin{theorem}\label{thm:SingularPointsAreIsolated2}
Suppose that $E(\theta) < \infty$, and let $t_0$ be a singular point of $\theta$ with $\tau(t_0) \not = 0$. Then $t_0$ is an isolated singular point, i.e., there is an $\varepsilon > 0$ such that the $\varepsilon$-neighborhood $I_{\varepsilon} \coloneqq (t_0 - \varepsilon, t_0 + \varepsilon) \cap [0,l]$ around $t_0$ does not contain any other singular point.
\end{theorem}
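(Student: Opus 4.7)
The plan is to argue by contradiction using Lemma~\ref{lem:Singularities}. Assume $t_0$ is not isolated, so there exist singular points $t_n \to t_0$ with $t_n \neq t_0$; by symmetry I may assume $t_n > t_0$. The Sobolev embedding $W^{1,4}([0,l]) \hookrightarrow C^{0,3/4}([0,l])$ shows that $\theta$ is H\"older continuous, whence $\theta(t_n) \to \theta(t_0)$. Since the singular set $\tfrac{\pi}{2} + \pi\mathbb{Z}$ is discrete, eventually $\theta(t_n) = \theta(t_0)$. Moreover, as $\tau$ is continuous and $\tau(t_0) \neq 0$, there is a $\delta > 0$ such that $\lvert \tau \rvert \geq \tfrac{1}{2}\lvert\tau(t_0)\rvert$ throughout $[t_0, t_0 + \delta] \cap [0,l]$.

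I would then apply Lemma~\ref{lem:Singularities} on the interval $I_n = [t_0, t_n]$ for each $n$ large enough that $t_n < t_0 + \delta$ and $\theta(t_n) = \theta(t_0)$. The left-hand side of \eqref{eq:SingularitiesLemma} vanishes, so the task reduces to showing that the right-hand side is strictly positive for small $t_n - t_0$. Here $A_n \geq \tfrac{1}{2}\lvert\tau(t_0)\rvert$ is bounded below by a positive constant and $E_{I_n}(\theta) \leq E(\theta) < \infty$. The crucial estimate concerns $B_n = \max_{I_n}\lvert \kappa\cos\theta\rvert$: from $\cos\theta(t_0) = 0$ together with the H\"older bound $\lvert\theta(t) - \theta(t_0)\rvert \leq \lVert\theta\rVert_{C^{0,3/4}}\lvert t - t_0\rvert^{3/4}$ one obtains $B_n \leq C(t_n - t_0)^{3/4}$ for a constant $C$ depending only on $\theta$ and $\lVert\kappa\rVert_{L^\infty}$.

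Substituting these bounds into \eqref{eq:SingularitiesLemma} and combining the exponents $3/8$ (from $B_n^{1/2}$) and $3/4$ (the interval-length factor in the lemma) into $9/8$ produces a lower bound of the form
\[
0 \;\geq\; \tfrac{1}{2}\lvert\tau(t_0)\rvert\,(t_n - t_0) \;-\; C'\,(t_n - t_0)^{9/8}.
\]
Since $9/8 > 1$, the right-hand side is strictly positive for all sufficiently small $t_n - t_0 > 0$, contradicting the vanishing of the left-hand side. The case $t_n < t_0$ is handled symmetrically on $[t_n, t_0]$. The main obstacle, and the reason the argument works, is precisely this exponent bookkeeping: the vanishing of $\cos\theta$ at $t_0$ must be leveraged through the H\"older regularity of $\theta$ to supply an additional factor $(t_n - t_0)^{3/8}$ that pushes the subtrahend to higher order than the linear main term. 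Without this improvement, Lemma~\ref{lem:Singularities} alone would be inconclusive.
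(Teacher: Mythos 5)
Your proof is correct and follows essentially the same route as the paper: both rest on Lemma~\ref{lem:Singularities} applied to the interval between $t_0$ and a nearby point, combined with the key observation that $\cos\theta(t_0)=0$ and the $C^{0,3/4}$ regularity of $\theta$ force $B=O(|t-t_0|^{3/4})$, so the subtracted term scales like $|t-t_0|^{9/8}$ and is dominated by the linear term $A|t-t_0|$ for small intervals. The only differences are cosmetic (you argue by contradiction with a sequence of singular points and use the global H\"older norm of $\theta$, whereas the paper argues directly and expresses the H\"older constant through the local energy $E_{[t_0,t]}^{1/4}$).
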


The simple heuristic behind the proof is that, in the neighborhood of a singular point, we must have 
\begin{equation*}
\theta' + \tau \approx 0,
\end{equation*}
as otherwise the energy cannot be bounded. So when $\tau$ does not vanish, the function $\theta$ must be strictly monotone.

\begin{proof}[Proof of Theorem~\textup{\ref{thm:SingularPointsAreIsolated2}}]
Let $t_0$ be a singular point of $\theta$, and choose $\varepsilon >0$ such that $\lvert \theta(t) - \theta(t_0) \rvert \leq \pi$ for all $t\in I_\varepsilon$. Noting that, by H\"older's inequality,   
\begin{equation*}
\lvert \theta(x) - \theta(y)\rvert \leq \lvert x-y \rvert^{\frac 34} \lVert \theta'\rVert_{L^4([x,y])} \leq \lvert x-y \rvert^{\frac 34} \Lambda^{\frac 12} E^{\frac 1 4}_{[x,y]} (\theta),
\end{equation*}
we first obtain, using that $\cos$ is Lipschitz continuous with Lipschitz constant one,
\begin{align*}
	\lvert\cos(\theta(t))\rvert & = \lvert\cos(\theta(t)) - \cos(\theta(t_0))\rvert \leq  \lvert\theta(t) - \theta(t_0)\rvert \leq \lvert t-t_0 \rvert^ {\frac 34} \Lambda^{\frac 1 2} E^{\frac 1 4}_{[t_0,t]}(\theta) \\ & \leq \lvert t- t_0\rvert ^{\frac 34} \Lambda^{ \frac 1 2} E^{\frac 1 4}_{[t-\varepsilon, t+\varepsilon]}(\theta).
\end{align*}

Then, as 
\begin{equation*}
B= \max_{x \in [t_0,t]} \lvert\kappa(x) \cos( \theta(x)) \rvert \leq \lvert t-t_0 \rvert^{\frac 3 4} \Lambda ^{\frac 1 2} E^{\frac 1 4}(\theta),
\end{equation*}
an application of Lemma~\ref{lem:Singularities} with $[a,b] = [t_0,t]$ gives
\begin{align*}
	\lvert \theta(t) - \theta(t_0)\rvert &\geq \lvert t-t_0 \rvert \min_{x \in [t_0- \varepsilon, t_0 + \varepsilon] \cap [0,l]} \lvert \tau(x)\rvert - \Lambda^{\frac 1 4} \lvert t-t_0 \rvert^{\frac 3 4 + \frac 3 8} E^{\frac 14 + \frac 1 8} (\theta ) 
	\\ & = \lvert t-t_0 \rvert \bigl(\min_{x \in [t_0- \varepsilon, t_0 + \varepsilon] \cap [0,l]} \lvert \tau(x)\rvert - \Lambda^{\frac 1 4} \lvert t-t_0 \rvert^{\frac 1 8} E^{\frac 3 8} (\theta )\bigr)
	\\ & \geq \lvert t-t_0 \rvert \bigl(\min_{x \in [t_0- \varepsilon, t_0 + \varepsilon] \cap [0,l]} \lvert \tau(x) \rvert - \Lambda^{\frac 1 4} \varepsilon^{\frac 1 8} E^{\frac 3 8} (\theta ) \bigr).
\end{align*}

Finally, since
\begin{equation*}
\min_{x \in [t_0- \varepsilon, t_0 + \varepsilon] \cap [0,l]} \lvert \tau(x) \rvert - \Lambda^{\frac 1 4} \varepsilon^{\frac 1 8} E^{\frac 3 8} (\theta ) \to \lvert\tau(t_0)\rvert >0 \quad \text{as $\varepsilon \to 0$},
\end{equation*}
it follows that there is an $\varepsilon >0$ such that 
\begin{equation*}
\lvert \theta(t) - \theta(t_0) \rvert  >0 \quad \text{for all $t \in I_\varepsilon$ with $t\neq t_0$}.
\end{equation*}
This shows that $I_\varepsilon$ does not contain any other singular point, as desired.	
\end{proof}

\section*{Acknowledgment}
We thank an anonymous referee for several corrections.

\bibliographystyle{amsplain}
\bibliography{biblio}
\end{document}